\newtheorem{theorem}{Theorem}
\newtheorem{lemma}[theorem]{Lemma}
\newtheorem{corollary}[theorem]{Corollary}
\begin{document}

\title{Cospectral mates for the union of some classes in the Johnson association scheme}
\author{Sebastian M.Cioab\u{a}\footnote{Department of Mathematical Sciences, University of Delaware, Newark, DE 19716-2553, USA. Research supported by NSF DMS-1600768 grant. {\tt cioaba@udel.edu}}\,, Willem H. Haemers\footnote{Tilburg University, Dept. of Econometrics and OR, Tilburg, The Netherlands. {\tt haemers@uvt.nl}},\\ Travis Johnston\footnote{Oak Ridge National Laboratory, Oak Ridge, TN 37831. {\tt johnstonjt@ornl.gov}}
\, and Matt McGinnis\footnote{Department of Mathematical Sciences, University of Delaware, Newark, DE 19716-2553, USA. Research supported by NSF DMS-1600768 grant. {\tt macmginn@udel.edu}} }
\date{\today}
\maketitle

\begin{abstract}
Let $n\geq k\geq 2$ be two integers and $S$ a subset of $\{0,1,\dots,k-1\}$.
The graph $J_{S}(n,k)$ has as vertices the $k$-subsets of the $n$-set $[n]=\{1,\dots,n\}$ and two $k$-subsets $A$ and $B$ are adjacent if $|A\cap B|\in S$.
In this paper, we use Godsil-McKay switching to prove that for $m\geq 0$, $k\geq \max(m+2,3)$ and $S = \{0, 1, ..., m\}$, the graphs $J_S(3k-2m-1,k)$ are not determined by spectrum
and for $m\geq 2$, $n\geq 4m+2$ and $S = \{0,1,...,m\}$ the graphs $J_{S}(n,2m+1)$ are not determined by spectrum.
We also report some computational searches for Godsil-McKay switching sets in the union of classes in the Johnson scheme for $k\leq 5$.
\end{abstract}

\section{Introduction}

The spectrum of a graph $G$ is the multi-set of eigenvalues of its adjacency matrix (see \cite{BH} for an introduction to spectral graph theory). Two graphs are called cospectral if they have the same spectrum. A graph $G$ is determined by spectrum if any graph cospectral to $G$ must be isomorphic to $G$. Two non-isomorphic graphs that are cospectral are called cospectral mates. An important research area of spectral graph theory is devoted to determining which graphs are determined by their spectra (see \cite{DH1,DH2} for example).

In this paper, we consider this problem for the union of classes in the Johnson association scheme. 
Let $n\geq k\geq 2$ be two integers and $S$ a subset of $\{0,1,\dots,k-1\}$.

The graph $J_{S}(n,k)$ has as vertices the $k$-subsets of the $n$-set $[n]=\{1,\dots,n\}$ and two $k$-subsets $A$ and $B$ are adjacent if $|A\cap B|\in S$.

Using this notation, the Johnson graph $J(n,k)$ is the graph $J_{\{k-1\}}(n,k)$ and the Kneser graph $K(n,k)$ is $J_{\{0\}}(n,k)$.

It is known that $J_{\{1\}}(n,2)$ is determined by its spectrum precisely when $n\neq 8$ (see \cite{Chang,Connor,Hoffman}). Also, the odd graphs $J_{\{0\}}(2k+1,k)$ for $k\geq 2$ are known to be determined by spectrum (see \cite{HL}). These graphs and their complements are the only nontrivial graphs in the Johnson scheme known to be determined by spectrum.

For $3\leq k\leq n-3$, the Johnson graphs $J(n,k)$ are not determined by spectrum (see \cite{DHKS}). For $k\geq 3$, the Kneser graphs $K(n,k)$ with $n=3k-1$ or $2n=6k-3+\sqrt{8k^2+1}$, as well as the mod-2 Kneser graph $J_{S}(n,k)$ (where $S$ is the set of even numbers in $\{0,1,\dots,k-1\}$) are not determined by spectrum (see \cite{HR}).

In this paper, we construct cospectral graphs with $J_{S}(n,k)$ for certain values of $n, k$ and $S$.
Our main results are that for any $m\geq0$ and $k\geq \max(m+2,3)$, if $S=\{0,1,\dots,m\}$, the graphs $J_S(3k-2m-1,k)$ are not determined by spectrum and for $m\geq 2$, $n\geq 4m+2$, if $S = \{0,1,...,m\}$, the graphs $J_{S}(n,2m+1)$ are not determined by spectrum.
Our main tool is the method of Godsil-McKay switching which constructs cospectral graphs to a given graph under certain regularity situations.
 
\begin{theorem}[Godsil-McKay \cite{GM}]\label{GM}
Let $G$ be a graph and $C_1\cup C_2 \cup \hdots \cup C_t \cup D$ a partition of the vertex set of $G$ satisfying the following conditions for $1\leq i,j\leq t$:
\begin{enumerate}[1)]
\item Any two vertices in $C_i$ have the same number of neighbors in $C_j$.
\item For each $C_i$ every vertex in $D$ is adjacent to $0, |C_i|/2$ or $|C_i|$ vertices in $C_i$. 
\end{enumerate}
Construct a new graph $H$ as follows. For every vertex $u$ not in $C_i$ with $|C_i|/2$ neighbors in $C_i$, delete the $|C_i|/2$ edges between $u$ and $C_i$ and join $u$ to the other $|C_i|/2$ vertices in $C_i$. Then $G$ and $H$ have the same spectrum.
\end{theorem}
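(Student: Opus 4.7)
The strategy is to produce a symmetric orthogonal matrix $Q$ such that $QAQ$ is the adjacency matrix of $H$, where $A$ is the adjacency matrix of $G$. Since $QAQ = Q^{-1}AQ$ is similar to $A$, the two spectra will then coincide. Writing $A$ in block form with respect to the partition $C_1 \cup \cdots \cup C_t \cup D$, I would define, for each $i$, the matrix $Q_i = \tfrac{2}{c_i} J_{c_i} - I_{c_i}$ (where $c_i = |C_i|$ and $J_{c_i}$ is the all-ones matrix of that size), and take $Q$ to be block-diagonal with blocks $Q_1, \ldots, Q_t$ on the $C_i$-coordinates and an identity block on $D$. A direct check shows $Q_i = Q_i^T$ and $Q_i^2 = I$, so $Q$ is symmetric and orthogonal.

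The next step is to verify block-by-block that $QAQ$ agrees with the adjacency matrix of $H$. On the $D$-by-$D$ block $Q$ acts as the identity, so nothing happens there. For a $C_i$-by-$C_j$ block (allowing $i = j$), condition~1 says the submatrix $A_{ij}$ has constant row sums, and combined with the symmetry of $A$ this forces both $J A_{ij}$ and $A_{ij} J$ to be scalar multiples of $J$. Expanding $Q_i A_{ij} Q_j = (\tfrac{2}{c_i}J - I) A_{ij} (\tfrac{2}{c_j}J - I)$ and applying these identities, the cross terms cancel and one recovers $A_{ij}$, so these interior blocks are inert under the conjugation.

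The substantive step concerns the mixed blocks $A_{iD}$. For a column $v$ of $A_{iD}$ corresponding to some $u \in D$, one computes $Q_i v = \tfrac{2}{c_i}(\mathbf{1}^T v)\mathbf{1} - v$. Condition~2 forces $\mathbf{1}^T v \in \{0, c_i/2, c_i\}$, so $Q_i v$ equals $v$, $\mathbf{1}-v$, or $v$ in the respective cases; this is precisely the switching operation described in the statement, and critically $Q_i v$ remains a $\{0,1\}$-vector. The whole argument hinges on condition~2, without which $QAQ$ would fail to be a $\{0,1\}$-matrix and would not correspond to any graph at all; condition~1 plays the secondary role of keeping the interior blocks untouched. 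Piecing the block computations together, $QAQ$ is a symmetric $\{0,1\}$-matrix with zero diagonal whose entries match those of the adjacency matrix of $H$, completing the proof.
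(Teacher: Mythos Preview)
Your argument is correct and is precisely the classical proof from Godsil and McKay's original paper: conjugation by the block-diagonal orthogonal involution $Q=\mathrm{diag}(Q_1,\ldots,Q_t,I)$ with $Q_i=\tfrac{2}{c_i}J-I$, followed by a block-by-block verification that $QAQ$ is the adjacency matrix of $H$. The present paper does not supply its own proof of this theorem at all---it merely quotes the result from \cite{GM} as a tool---so there is nothing to compare against here beyond noting that you have reproduced the standard argument faithfully.
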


The operation above that changes $G$ into $H$ is called Godsil-McKay switching. For our purposes we will only need $t=1$. Hence, from here forward we will simply use $C$ to denote a switching set.

At the end of our paper, we also report our computational results searching for switching sets in various classes and union of classes of the Johnson scheme including the Kneser graphs $K(9,3)$ and $K(10,3)$ which are the smallest examples of Kneser graphs for which it is not known whether they are determined by spectrum.

\section{Main Results}

\subsection{$J_S(n,2m+1)$ where $S = \{0, 1, ..., m\}$ and $n\geq 4m+2$}

\begin{theorem}\label{SS_2}
Let $m\geq 2$, $n\geq 4m+2$ and $S = \{0, 1, ..., m\}$.
If $G = J_S(n,2m+1)$ and $C =\{c\in V(G) : c\subset\{1,...,2m+2\} \text{ and } |c| = 2m+1\}$, then $C$ is a switching set in $G$.
\end{theorem}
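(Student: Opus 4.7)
The plan is to verify directly the two hypotheses of Theorem~\ref{GM} with $t=1$ applied to the set $C$. First I would record that $|C|=\binom{2m+2}{2m+1}=2m+2$, so that $|C|/2=m+1$; the target is then to show that every vertex of $C$ has a constant number of neighbors in $C$, and every $v\in D$ has $0$, $m+1$, or $2m+2$ neighbors in $C$.

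For hypothesis (1), I would observe that $C$ is in fact an independent set: any two distinct $(2m+1)$-subsets of $\{1,\dots,2m+2\}$ share exactly $2m$ elements, and $2m\notin S=\{0,1,\dots,m\}$ because $m\geq 2$. Hence every $c\in C$ has zero neighbors inside $C$, and (1) holds trivially.

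The bulk of the proof is hypothesis (2). My plan is to parameterize vertices $v\in D$ by the intersection size $i:=|v\cap\{1,\dots,2m+2\}|$, which lies in $\{0,1,\dots,2m\}$ (the upper bound $2m$ coming from $v\notin C$). Writing each $c\in C$ as $\{1,\dots,2m+2\}\setminus\{x\}$ for a unique $x\in\{1,\dots,2m+2\}$, one immediately sees that $|v\cap c|=i$ when $x\notin v$ and $|v\cap c|=i-1$ when $x\in v$. Thus among the $2m+2$ elements of $C$, exactly $i$ give intersection $i-1$ and $2m+2-i$ give intersection $i$. I would then split into three cases: (a) if $i\leq m$, both $i-1$ and $i$ lie in $S$, so $v$ has $|C|=2m+2$ neighbors in $C$; (b) if $i=m+1$, the $m+1$ subsets with $x\in v$ give $|v\cap c|=m\in S$ while the other $m+1$ give $|v\cap c|=m+1\notin S$, yielding exactly $|C|/2=m+1$ neighbors; (c) if $i\geq m+2$, neither $i-1$ nor $i$ lies in $S$, yielding $0$ neighbors.

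I do not expect any real obstacle: the argument is a direct combinatorial bookkeeping on intersection sizes. The only point that is even mildly subtle is that the boundary case $i=m+1$ is precisely what produces the required "$|C|/2$" count, a balance that depends crucially on the identity $|C|=2(m+1)$ combined with the parity of $k=2m+1$. I would also remark that the hypothesis $n\geq 4m+2$ does not seem necessary for verifying the switching conditions themselves (which succeed as soon as $D$ is non-empty, i.e.\ $n\geq 2m+3$); the stronger assumption presumably enters later to guarantee that the switched graph is non-isomorphic to $G$, which lies outside the scope of this particular statement.
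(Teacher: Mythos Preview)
Your proof is correct and follows essentially the same route as the paper's: both arguments note that $C$ is independent (since any two $(2m+1)$-subsets of $[2m+2]$ meet in $2m\notin S$), then case on $i=|v\cap[2m+2]|$ and split into $i\le m$, $i=m+1$, and $i\ge m+2$ to obtain $|C|$, $|C|/2$, and $0$ neighbors respectively. Your explicit parameterization $c=[2m+2]\setminus\{x\}$ makes the counting in the middle case slightly cleaner than the paper's, and your side remark that $n\ge 4m+2$ is not actually needed for the switching-set conditions (only later for non-isomorphism) is accurate.
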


\begin{proof}
First, note that $C$ is an independent set of size $\binom{2m+2}{2m+1} = 2m+2$ since the intersection of any two vertices in $C$ has cardinality $2m$. Let $R = [2m+2]$ and $u\in V(G)\setminus C$.  
We now consider the following cases:

\begin{enumerate}[(i)]
\item If $0\leq|u\cap R|\leq m$, then for $c \in C$, $|u\cap c|\leq m$. Hence $u$ will be adjacent to every vertex in $C$.

\item If $|u\cap R| = m+1$, then there are $\binom{(2m+2)-(m+1)}{m} = \binom{m+1}{m} = m+1$ vertices in $C$ that intersect $u$ in $m+1$ elements, implying they will not be adjacent to $u$.
The remaining $m+1$ vertices in $C$ must be adjacent to $u$ as their intersection with $u$ will have cardinality $m$.
Hence $u$ will be adjacent to exactly half of the vertices in $C$.

\item If $m+2\leq |u\cap R|\leq 2m$, then $|R\setminus u|\leq (2m+2)-(m+2) = m$.
So for $c\in C$, $|u\cap c|\geq m+1$.
Hence $u$ will have no neighbors in $C$.
\end{enumerate}
\end{proof}

\begin{theorem}\label{NonIso_2}
Let $m\geq 2$, $S = \{0, 1, ..., m\}$ and $C$ be the switching set described in Theorem \ref{SS_2}.
If $G = J_S(n,2m+1)$ and $CM$ is the graph obtained by switching with respect to $C$, then $G$ and $CM$ are not isomorphic.
\end{theorem}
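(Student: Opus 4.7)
The plan is to prove $CM\not\cong G$ by showing that $CM$ is not vertex-transitive, while $G = J_S(n,2m+1)$ clearly is under the natural action of $S_n$ on $[n]$. Write $L_H(v) := H[N_H(v)]$ for the induced subgraph on the open neighborhood, and fix $R = \{1,\dots,2m+2\}$ and $c := R\setminus\{1\}\in C$. By vertex-transitivity of $G$, all $L_G(v)$ are isomorphic to $L_G(c)$. For any $v\in V(G)\setminus C$ satisfying case (iii) of Theorem~\ref{SS_2}, namely $m+2 \le |v\cap R| \le 2m$ (this class is nonempty since $m\ge 2$), $v$ has no neighbor in $C$ in either $G$ or $CM$, and $CM$ restricted to $V(G)\setminus C$ equals $G$ restricted to $V(G)\setminus C$, so $L_{CM}(v) = L_G(v) \cong L_G(c)$. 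Consequently, to rule out $CM\cong G$ it suffices to show $L_{CM}(c)\not\cong L_G(c)$, for then $c$ cannot be in the same $\mathrm{Aut}(CM)$-orbit as any such $v$.

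Writing $D_{\mathrm{i}} = \{w\in V(G)\setminus C: |w\cap R|\le m\}$ and $D_{\mathrm{ii}} = \{u\in V(G)\setminus C: |u\cap R|= m+1\}$, the subgraphs to compare are $L_G(c) = G[D_{\mathrm{i}} \cup \{u\in D_{\mathrm{ii}}: 1\in u\}]$ and $L_{CM}(c) = G[D_{\mathrm{i}} \cup \{u\in D_{\mathrm{ii}}: 1\notin u\}]$. The involution $\phi(u) := (R\setminus u)\cup(u\setminus R)$ bijects $\{u \in D_{\mathrm{ii}}: 1\in u\}$ onto $\{u\in D_{\mathrm{ii}}: 1\notin u\}$ and preserves $|u\cap u'|$ for $u,u'\in D_{\mathrm{ii}}$, so the two induced subgraphs on the $D_{\mathrm{ii}}$-halves are $G$-isomorphic; any discrepancy between $L_G(c)$ and $L_{CM}(c)$ must live in the bipartite edges from $D_{\mathrm{i}}$. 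Comparing the degree of each $w\in D_{\mathrm{i}}$ in the two local graphs, the difference is
\[
\Delta(w) \;=\; |\{u\in D_{\mathrm{ii}}: 1\in u,\ u\sim w\}| - |\{u\in D_{\mathrm{ii}}: 1\notin u,\ u\sim w\}|,
\]
which a Vandermonde-type computation evaluates in closed form depending on $|w\cap R|$ and on whether $1\in w$. At $|w\cap R|=1$, $\Delta(w)$ is a specific nonzero integer with one sign for the unique $w$ with $1\in w$ (up to its $R^c$-part) and with the opposite sign for each of the $2m+1$ vertices $w$ with $|w\cap R|=1$ and $1\notin w$, forcing a multiplicity mismatch in the multi-set of $D_{\mathrm{i}}$-degrees.

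The main obstacle is showing that this mismatch at $|w\cap R|=1$ genuinely persists in the \emph{full} degree multi-sets of $L_G(c)$ and $L_{CM}(c)$, i.e.\ that it is not accidentally cancelled by shifts among $D_{\mathrm{i}}$-vertices of other values $|w\cap R|\in\{0,2,\dots,m\}$ or by the matched $D_{\mathrm{ii}}$-contributions. This amounts to computing the degrees by a case analysis on $|w\cap R|$, writing each contribution as a Vandermonde sum and checking that the value sets arising at $|w\cap R|=1$ do not coincidentally match those at any other $|w\cap R|$; the hypothesis $n\ge 4m+2$ is precisely what keeps the relevant binomial coefficients $\binom{n-4m-2}{m-q}$ positive and rules out such accidental collisions across the stated parameter range.
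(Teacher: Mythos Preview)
Your overall strategy—show $CM$ is not vertex-transitive by finding two vertices whose local graphs in $CM$ are non-isomorphic—is sound, and your reduction to comparing $L_G(c)$ with $L_{CM}(c)$ is correct (the observations about case~(iii) vertices, about $\phi$, and about the $D_{\mathrm{ii}}$-degrees matching under $\phi$ all check out). The gap is in the final step. You correctly identify that the difference between the two local graphs lives entirely in the bipartite edges $D_{\mathrm{i}}\!\leftrightarrow\! D_{\mathrm{ii}}$, and you compute $\Delta(w)$ at $|w\cap R|=1$. But to conclude that the \emph{degree multi-sets} of $L_G(c)$ and $L_{CM}(c)$ differ, you must rule out cancellation across all values $|w\cap R|\in\{0,\dots,m\}$ and both cases $1\in w$, $1\notin w$, simultaneously with the (constant) $D_{\mathrm{ii}}$-degree. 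You acknowledge this as ``the main obstacle'' and then assert that $n\ge 4m+2$ ``rules out such accidental collisions''—but you give no argument for this. That assertion amounts to claiming that a collection of $2m{+}2$ degree values, each a Vandermonde-type polynomial in $n$, are distinct and not permuted into one another under the shifts $\Delta(j,\epsilon)$, for \emph{every} $m\ge 2$ and $n\ge 4m+2$. This is not obvious, and you have not sketched how to verify it. (Note also that the total edge counts agree: $\sum_{w\in D_{\mathrm{i}}}\Delta(w)=0$, since summing over $u$ instead and using the $S_R$-symmetry shows each $u\in A$ has as many $D_{\mathrm{i}}$-neighbours as $\phi(u)\in B$; so the simplest invariant does not separate the two local graphs.)

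The paper takes a much more direct route that sidesteps the multi-set comparison entirely. It fixes the pair $c_0=[2m+1]$ and $v=\{2m+2,\dots,4m+2\}$ and looks at the single number $\lambda(c_0,v)$, the count of common neighbours. A short automorphism argument (using the cyclic permutation of $[2m+1]$, which fixes $v$ and permutes $C\setminus\{c_0\}$) shows that if $G\cong CM$ then $\lambda_G(c_0,v)=\lambda_{CM}(c_0,v)$. A direct count then gives $\lambda_{CM}(c_0,v)-\lambda_G(c_0,v)=\binom{2m+1}{m+1}\binom{2m}{m}\neq 0$. It is worth noting that this is exactly $-\Delta(w)$ for your $w$ with $w\cap R=\{1\}$: you are computing the right combinatorial quantity, but packaging it as one entry of a degree multi-set forces you into a global comparison you cannot easily finish, whereas the paper isolates it as a single scalar invariant.
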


We define the \textit{common neighbor count} $\lambda_G(x,y)$ of two vertices, $x$ and $y$ in $G$, as the number of vertices that are neighbors of both $x$ and $y$.
The \textit{common neighbor pattern} of a vertex, $x$ in $G$, is the multi-set of all possible values of $\lambda_G(x,y)$ where $y$ runs through the vertex set of $G$.

Consider the following vertices 
\begin{equation*}
c_0 = [2m+1] \text{ and } v = \{2m+2,...,4m+2\}
\end{equation*}
in $G$ and $CM$.

\begin{lemma}\label{CommonNeighbor2}
If $G$ and $CM$ are isomorphic, then $\lambda_G(c_0,v) = \lambda_{CM}(c_0,v)$.
\end{lemma}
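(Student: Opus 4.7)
The plan is to exploit the $S_n$-symmetry of $G$ alongside the preservation of common neighbor patterns under isomorphism. Since $G = J_S(n, 2m+1)$ is invariant under the natural $S_n$-action on $(2m+1)$-subsets, and this action is transitive on ordered pairs of vertices with any prescribed intersection size, the common neighbor count $\lambda_G(x,y)$ depends only on $|x \cap y|$. Writing $\mu_i := \lambda_G(x,y)$ whenever $|x \cap y| = i$, we obtain $\lambda_G(c_0,v) = \mu_0$ because $c_0 \cap v = \emptyset$. Moreover, every vertex of $G$ shares the same common neighbor pattern $\Pi$ --- namely the multi-set in which each value $\mu_i$ appears with multiplicity $N_i = \binom{2m+1}{i}\binom{n-2m-1}{2m+1-i}$.

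The common neighbor pattern of a vertex is an isomorphism invariant, so under the hypothesis $G \cong CM$ every vertex of $CM$ also realizes $\Pi$. In particular, the multi-set $\{\lambda_{CM}(c_0,y) : y \neq c_0\}$ equals $\Pi$, which contains $\mu_0$ with multiplicity $N_0$.

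The remaining step is to pin down $\lambda_{CM}(c_0,v)$ as exactly $\mu_0$. I would compute $\lambda_{CM}(c_0,v)$ directly from the switching definition, exploiting that $|v \cap R| = 1 \le m$ places $v$ in Case (i) of the proof of Theorem \ref{SS_2}: thus $v$ is adjacent to every vertex of $C$ both before and after switching, and in particular the edge $c_0 v$ is unaffected. The candidate common neighbors of $c_0$ and $v$ whose adjacency status can change are precisely those $u \in D$ with $|u \cap R| = m+1$, for which the edge $uc_0$ is flipped by switching while the edge $uv$ is not.

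The main obstacle is showing that the net change to $\lambda(c_0,v)$ caused by these flips vanishes, yielding $\lambda_{CM}(c_0,v) = \mu_0 = \lambda_G(c_0,v)$. I expect this to reduce to a combinatorial cancellation arising from the complementary positions of $c_0$ and $v$ relative to $R$ --- namely, a bijection between newly created and newly destroyed common neighbors among the switched vertices $u$ with $|u \cap R| = m+1$ --- possibly combined with the multi-set equality above to force the value $\mu_0$ at the specific position $v$.
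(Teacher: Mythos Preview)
Your proposal contains a genuine gap. The central plan---to show $\lambda_{CM}(c_0,v)=\lambda_G(c_0,v)$ by a direct combinatorial count of flips, expecting the net change to vanish---is exactly the wrong expectation. The net change is \emph{not} zero: as the paper computes in the very next proof (Theorem~\ref{NonIso_2}), one has $\lambda_{CM}(c_0,v)=\lambda_G(c_0,v)+\binom{2m+1}{m+1}\binom{2m}{m}$. The whole purpose of the lemma is to set up the contrapositive: because the two values genuinely differ, $G$ and $CM$ cannot be isomorphic. So any argument for the lemma that does not essentially use the hypothesis $G\cong CM$ is doomed; your direct-cancellation step would prove the lemma unconditionally, which is false.

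The paper's proof avoids this by anchoring on $v$ rather than on $c_0$. Since $v$ is adjacent to all of $C$ (Case~(i), as you noted), one checks that for every $u\notin C$ the number of common neighbors $\lambda(u,v)$ is unchanged by switching. Hence, under the isomorphism hypothesis, the common neighbor pattern of $v$ is preserved, and the only entries that could move are the $2m+2$ values $\lambda(c_i,v)$ for $c_i\in C$; as multi-sets these must coincide before and after switching. The permutation $(1\,2\,\cdots\,2m+1)$ is an automorphism of both $G$ and $CM$ fixing $c_0$ and $v$ and permuting $c_1,\dots,c_{2m+1}$ cyclically, so $\lambda_G(c_1,v)=\cdots=\lambda_G(c_{2m+1},v)$ and likewise in $CM$. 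Comparing the two multi-sets $\{\lambda_G(c_0,v),a,\dots,a\}$ and $\{\lambda_{CM}(c_0,v),b,\dots,b\}$ then forces $\lambda_G(c_0,v)=\lambda_{CM}(c_0,v)$. Your multi-set observation about $c_0$'s pattern containing $\mu_0$ is correct but, by itself, does not locate $\mu_0$ at the specific position $y=v$; the missing idea is to look at $v$'s pattern instead and isolate the $C$-entries via the invariance of the $D$-entries.
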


\begin{proof}
As $G$ is vertex transitive, all vertices of $G$ will have the same common neighbor pattern. 
If $G$ and $CM$ are isomorphic, then all vertices of $CM$ will have the same common neighbor pattern as well.
In particular, $v$ will have the same common neighbor pattern before and after switching.
Based on the way switching is defined it follows that $\lambda_G(u,v) = \lambda_{CM}(u,v)$ for all $u$ not in $C$.
This implies
\begin{equation*}
\{\lambda_G(c_0,v),...,\lambda_G(c_{2m+1},v)\} = \{\lambda_{CM}(c_0,v),...,\lambda_{CM}(c_{2m+1},v)\},
\end{equation*}
where $c_i$ is the vertex in $C$ that does not contain the element $i$ for $1\leq i\leq 2m+1$.

Moreover, the permutation $(1,2,...,2m+1)$ of $[n]$ induces an automorphism of $G$ that fixes $c_0$ and $v$ and cyclically shifts $c_1,c_2,...,c_{2m+1}$.
It follows that
\begin{equation*}
\lambda_G(c_1,v) = \hdots = \lambda_G(c_{2m+1},v).
\end{equation*}
This permutation remains an automorphism after switching, thus
\begin{equation*}
\lambda_{CM}(c_1,v) = \hdots = \lambda_{CM}(c_{2m+1},v).
\end{equation*}
Therefore, if $G$ and $CM$ are isomorphic $\lambda_G(c_0,v) = \lambda_{CM}(c_0,v)$.
\end{proof}

\begin{proof}[Proof of Theorem \ref{NonIso_2}]
By Lemma \ref{CommonNeighbor2}, it is sufficient to show $\lambda_G(c_0,v) \neq \lambda_{CM}(c_0,v)$.
Before switching $c_0$ is adjacent to vertices of the form
\begin{equation*}
\binom{[2m+1]}{m}\cup\{2m+2\}\cup\binom{[n]\setminus[2m+2]}{m}.
\end{equation*}
Of these vertices, there are exactly $\binom{2m+1}{m}\left(\sum\limits_{i=0}^{m-1} \binom{2m}{i}\binom{n-(4m+2)}{m-i}\right)$ adjacent to $v$.
This accounts for the number of common neighbors of $c_0$ and $v$ lost during switching.
After switching $c_0$ becomes adjacent to vertices of the form
\begin{equation*}
\binom{[2m+1]}{m+1}\cup\binom{[n]\setminus[2m+2]}{m}.
\end{equation*}
Of these vertices, there are exactly $\binom{2m+1}{m+1}\left(\sum\limits_{i=0}^m \binom{2m}{i}\binom{n-(4m+2)}{m-i}\right)$ adjacent to $v$.
As $n\geq 4m+2$, it follows that $\lambda_{CM}(c_0,v) = \lambda_{G}(c_0,v)+\binom{2m+1}{m+1}\binom{2m}{m}$. 
Therefore, $G$ and $CM$ are nonisomorphic.
\end{proof}

\begin{corollary}\label{Non-DS2}
For $m\geq 2$, $n\geq 4m+2$ and $S = \{0, 1, ..., m\}$, the graphs $J_S(n,2m+1)$ are not determined by spectrum.
\end{corollary}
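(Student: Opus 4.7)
The corollary is essentially a bookkeeping consequence of the three preceding results, so my plan is to assemble them rather than prove anything new. I would begin by fixing arbitrary $m\geq 2$, $n\geq 4m+2$ and $S=\{0,1,\ldots,m\}$ and setting $G=J_S(n,2m+1)$, together with the specific subset
\[
C=\{c\in V(G):c\subset[2m+2],\ |c|=2m+1\}.
\]

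The first step is to invoke Theorem \ref{SS_2}, which asserts exactly that this $C$ is a switching set in $G$, i.e.\ it meets the hypotheses of Theorem \ref{GM} with $t=1$. Applying Theorem \ref{GM} to the partition $C\cup(V(G)\setminus C)$ then produces a graph $CM$ on the same vertex set as $G$ with the same spectrum as $G$. By construction, $CM$ is cospectral to $G$.

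The second step is to rule out isomorphism. Theorem \ref{NonIso_2} states precisely that the graph obtained from $G$ by Godsil-McKay switching on $C$ is not isomorphic to $G$; that is, $CM\not\cong G$. Combining the two facts, $CM$ is a cospectral mate of $G$, so $G$ is not determined by its spectrum. Since $m$, $n$ were arbitrary subject to the stated constraints, the corollary follows.

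There is no substantive obstacle here: the whole content has been placed in Theorems \ref{SS_2} and \ref{NonIso_2}, and the only care needed is to check that the parameter regime $(m\geq 2,\ n\geq 4m+2)$ is the same in both theorems and in the corollary, which it is. The proof can therefore be written in essentially one sentence.
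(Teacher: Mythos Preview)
Your proposal is correct and matches the paper's approach exactly: the corollary is stated without proof in the paper because it is an immediate consequence of Theorems~\ref{SS_2}, \ref{GM}, and~\ref{NonIso_2}, precisely as you assemble them.
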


\subsection{$J_S(3k-2m-1,k)$ where $S = \{0,...,m\}$}

\begin{theorem}\label{SS_1}
Let $m\geq 0$, $k\geq m+2$ and $S = \{0, 1, ..., m\}$. If $G = J_S(3k-2m-1,k)$ and $C = \{c\in V(G) : [k-1]\subset c\}$, then $C$ is a switching set in $G$.
\end{theorem}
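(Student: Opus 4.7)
The plan is to verify the two Godsil--McKay conditions of Theorem \ref{GM} (with $t=1$) directly from the definition of $C$, relying on the fact that each element of $C$ is obtained from the fixed set $[k-1]$ by adjoining a single extra element.

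First I would record the basic combinatorics. Since $[k-1]\subset c$ for every $c\in C$, each such $c$ is of the form $c_i=[k-1]\cup\{i\}$ for a unique $i$ in the ``outside'' set $T:=[3k-2m-1]\setminus[k-1]$, which has $|T|=2k-2m$ elements. Hence $|C|=2k-2m$, so $|C|/2=k-m$. For Condition (1), I would note that any two distinct vertices $c_i,c_j\in C$ satisfy $|c_i\cap c_j|=k-1$; because $k\geq m+2$, we have $k-1\geq m+1\notin S$, so $C$ is independent, and every vertex of $C$ has the same number (namely $0$) of neighbors inside $C$.

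For Condition (2), let $u\in V(G)\setminus C$ and set $a:=|u\cap[k-1]|$, so $0\le a\le k-2$ (the extreme $a=k-1$ is excluded since then $u$ would lie in $C$). For each $i\in T$,
\begin{equation*}
|u\cap c_i|\;=\;a+\mathbf{1}[i\in u],
\end{equation*}
so $u$ is adjacent to $c_i$ in $G$ iff $a+\mathbf{1}[i\in u]\in S=\{0,\dots,m\}$. I would then split on $a$:
\begin{compactitem}
\item If $a\le m-1$, both $a$ and $a+1$ lie in $S$, so $u$ is adjacent to all $|C|$ vertices of $C$.
\item If $a\ge m+1$, neither $a$ nor $a+1$ lies in $S$, so $u$ has $0$ neighbors in $C$.
\item If $a=m$, then $u\sim c_i$ iff $i\notin u$, giving $|T|-|u\cap T|=(2k-2m)-(k-a)=k-m$ neighbors, which is exactly $|C|/2$.
\end{compactitem}
Thus every $u\in V(G)\setminus C$ has $0$, $|C|/2$, or $|C|$ neighbors in $C$, completing Condition (2) and hence the proof.

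The only step that requires any care is the count in the borderline case $a=m$: one must check that $|T\setminus u|$ really equals $|C|/2$, which boils down to the identity $(2k-2m)-(k-m)=k-m$ together with the observation that $u\cap T$ has cardinality $k-a=k-m$. Everything else is bookkeeping on intersection sizes, and the hypothesis $k\ge m+2$ enters in exactly one place, ensuring that distinct vertices of $C$ are non-adjacent so that Condition (1) holds trivially.
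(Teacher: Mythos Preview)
Your proof is correct and follows essentially the same route as the paper's: both identify $C$ as the $2(k-m)$ sets $[k-1]\cup\{i\}$, verify that $C$ is independent, and then case on how much of $u$ lies inside versus outside $[k-1]$. The only cosmetic difference is that the paper parametrizes by $|u\cap R|$ (your $|u\cap T|$) while you parametrize by the complementary quantity $a=|u\cap[k-1]|=k-|u\cap T|$, leading to the same three cases.
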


\begin{proof}
First, note that $C$ is an independent set. Let $R = [3k-2m-1]\setminus [k-1]$. Then $|R| = 2(k-m)$.

Now, every $c\in C$ has the form $[k-1]\cup \{r\}$ for some $r\in R$. So $|C| = 2(k-m)$. Let $u\in V(G)\setminus C$. We consider the following cases:

\begin{enumerate}[(i)]
\item If $2\leq |u\cap R|\leq k-m-1$, then $|u\cap [k-1]|\geq k-(k-m-1) = m+1$. Hence $u$ has no neighbors in $C$.

\item If $|u\cap R| = k-m$, then $|u\cap [k-1]| = m$. So there will be $k-m$ vertices in $C$ sharing exactly $m$ elements with $u$ and $k-m$ vertices in $C$ sharing exactly $m+1$ elements with $u$. Hence, $u$ will be adjacent to half the vertices in $C$.

\item If $k-m+1\leq |u\cap R|\leq k$, then $|u\cap [k-1]|\leq k-(k-m+1)=m-1$. Hence $u$ will be adjacent to each vertex in $C$.
\end{enumerate}
\end{proof}

\begin{theorem}\label{NonIso_1}
Let $m\geq 0$, $k\geq \max(m+2,3)$, $S = \{0, 1, ..., m\}$ and $C$ be the switching set described in Theorem \ref{SS_1}.
If $G = J_S(3k-2m-1,k)$ and $CM$ is the graph obtained by switching with respect to $C$, then $G$ and $CM$ are not isomorphic.
\end{theorem}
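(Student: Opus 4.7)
The plan is to mirror the proof of Theorem \ref{NonIso_2}: fix a specific pair $c_0 \in C$ and $v \in V(G) \setminus C$, establish an analog of Lemma \ref{CommonNeighbor2} saying that $\lambda_G(c_0,v) = \lambda_{CM}(c_0,v)$ whenever $G \cong CM$, and then compute both sides to derive a contradiction.

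Take $c_0 = [k]$, the unique element of $C$ containing $k$. The choice of $v$ splits into two cases according to whether $k \leq 2m+1$ or not. If $k \leq 2m+1$ (which forces $m \geq 1$), put $v = X \cup D$, where $D = \{k+1,\ldots,3k-2m-1\}$ and $X$ is any subset of $[k-1]$ of size $2m+1-k$; then $v \supset D$ and $|v \cap R| = 2k-2m-1 \geq k-m+1$, so $v$ lies in case (iii) of the classification in Theorem \ref{SS_1}. If $k > 2m+1$ (in which case $k \geq m+3$ automatically), put $v = \{2,\ldots,k-1\} \cup \{k+1, k+2\}$; then $|v \cap R| = 2 \leq k-m-1$, so $v$ lies in case (i). In either situation $v \neq c_0$, $v \notin C$, and since $v$ avoids case (ii), switching does not change the adjacencies of $v$, i.e.\ $N_G(v) = N_{CM}(v)$.

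The analog of Lemma \ref{CommonNeighbor2} is proved as in that lemma. Vertex-transitivity of $G$ combined with $G \cong CM$ forces the common-neighbor pattern of $v$ to be the same in $G$ and $CM$. Since Godsil--McKay switching preserves $\lambda(u,v)$ whenever $u \notin C$, the multisets $\{\lambda_G(c,v) : c \in C\}$ and $\{\lambda_{CM}(c,v) : c \in C\}$ agree. The subgroup of $\text{Sym}([n])$ that fixes $c_0$ and $v$ setwise and also preserves $[k-1]$ setwise consists of graph automorphisms of both $G$ and $CM$, and its action on $C$ keeps $c_0$ as a singleton orbit. In the first case above this subgroup acts transitively on $C \setminus \{c_0\}$, so the multiset equality immediately gives $\lambda_G(c_0,v) = \lambda_{CM}(c_0,v)$. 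In the second case $C$ decomposes into three orbits of sizes $1,\,2,\,2k-2m-3$, which are all distinct for $k \geq m+3$; a short case analysis of how these values may merge into the multiset still forces $\lambda_G(c_0,v) = \lambda_{CM}(c_0,v)$.

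The explicit computation then follows the same template as in Theorem \ref{NonIso_2}. The only vertices whose adjacency with $c_0$ changes under switching are the case (ii) vertices $u = T \cup U$ with $T \in \binom{[k-1]}{m}$ and $U \in \binom{R}{k-m}$, and such a $u$ is adjacent to $c_0$ in $G$ iff $k \notin U$ and in $CM$ iff $k \in U$. Since $N_G(v) = N_{CM}(v)$, we obtain $\lambda_{CM}(c_0,v) - \lambda_G(c_0,v) = |P \cap N_G(v)| - |Q \cap N_G(v)|$, where $P$, $Q$ are the case (ii) vertices containing, respectively not containing, $k$. A direct enumeration gives this difference as $\binom{2k-2m-1}{k-m}\binom{2k-2m-2}{k-m-1}$ in the first case and $\binom{k-1}{m}\bigl(\binom{2k-2m-3}{k-m-1} - \binom{2k-2m-3}{k-m}\bigr)$ in the second, both strictly positive (the latter by unimodality of the middle binomial coefficients). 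This contradicts the equality forced by the lemma, so $G$ and $CM$ are not isomorphic. The main subtlety I expect is the orbit analysis in the second case, where three orbits rather than two must be reconciled with the multiset equality to pin down the value at $c_0$, together with the small-parameter check that the per-orbit common-neighbor values do not coincide in a way that makes the multiset ambiguous.
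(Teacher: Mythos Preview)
Your overall strategy matches the paper's: fix a vertex outside $C$, argue via vertex-transitivity and the orbit structure on $C$ that an isomorphism forces $\lambda_G(c_0,\cdot)=\lambda_{CM}(c_0,\cdot)$, then compute the change under switching. The computations you sketch are correct in both cases, including the final positive differences.

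The paper, however, avoids your case split entirely by a single more symmetric choice of comparison vertex: it takes
\[
w=[k-2]\cup\{k,k+1\},
\]
which works uniformly for all $k\ge\max(m+2,3)$. The point is that $w$ contains both $k$ and $k+1$, so the transposition $(k\ \,k+1)$ fixes $w$ and $[k-1]$ (hence is an automorphism of both $G$ and $CM$) while swapping $c_0$ and $c_1$. Combined with the cycle $(k+2,\ldots,3k-2m-1)$, this collapses $C$ to just two orbits $\{c_0,c_1\}$ and $\{c_2,\ldots,c_{2k-2m-1}\}$, and the multiset argument goes through without any further case analysis.

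Your case~2 as written does have a real (if repairable) gap: by insisting that the symmetry group also fix $c_0$, you artificially split $C$ into three orbits of sizes $1,2,2k-2m-3$, and the multiset equality of per-orbit values no longer pins down the value at $c_0$ without the extra ``small-parameter check'' you flag but do not carry out. The simplest fix is to drop the requirement that the group fix $c_0$: the stabilizer of $v=\{2,\ldots,k-1,k+1,k+2\}$ and $[k-1]$ already permutes $R\setminus\{k+1,k+2\}=\{k,k+3,\ldots,3k-2m-1\}$ transitively, so $c_0$ lies in the large orbit of size $2k-2m-2\ge 4>2$, and the two-orbit multiset argument then forces $\lambda_G(c_0,v)=\lambda_{CM}(c_0,v)$ directly. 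With that adjustment your argument is complete, though the paper's single choice of $w$ is still tidier.
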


Consider the vertices 
\begin{equation*}
c_0 = [k],\; c_1 = [k-1]\cup\{k+1\} \text{ and } w = [k-2]\cup\{k,k+1\}
\end{equation*}
in $G$ and $CM$.

\begin{lemma}\label{CommonNeighbor1}
If $G$ and $CM$ are isomorphic, then $\lambda_G(c_0,w)= \lambda_{CM}(c_0,w)$.
\end{lemma}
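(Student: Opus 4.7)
The plan is to mirror the proof of Lemma~\ref{CommonNeighbor2}. First, vertex-transitivity of $G$ gives all its vertices a single common neighbor pattern, and under the isomorphism assumption $G \cong CM$ the same pattern occurs at every vertex of $CM$; in particular $w$ has the same common neighbor pattern in $G$ and in $CM$.

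Second, I would show $\lambda_G(u,w) = \lambda_{CM}(u,w)$ for every $u \in V(G)\setminus C$. Adjacencies outside $C$ are unaffected by switching, so common neighbors of $u$ and $w$ outside $C$ are preserved. For common neighbors inside $C$, the trichotomy (i)--(iii) of Theorem~\ref{SS_1} applies to both $u$ and $w$: whenever either vertex has $0$ or $|C|$ neighbors in $C$ the count is obviously invariant, and when both fall into case (ii) the inclusion--exclusion identity
\[
|N_{CM}(u)\cap N_{CM}(w)\cap C| = |C| - \tfrac{|C|}{2} - \tfrac{|C|}{2} + |N_G(u)\cap N_G(w)\cap C|
\]
shows invariance as well. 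Combined with the previous step, this forces the multiset equality
\[
\{\lambda_G(c,w) : c\in C\} = \{\lambda_{CM}(c,w) : c\in C\}.
\]

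Third, I would exhibit two kinds of automorphisms of $G$ that preserve $C$ setwise (so they act on $CM$ as well) and fix $w$ as a set: the transposition $(k,k+1)$, which swaps $c_0$ and $c_1$ and fixes every other element of $C$; and any cyclic permutation of $\{k+2,\ldots,n\}$, which fixes $c_0$, $c_1$, and $w$ while acting transitively on the remaining elements of $C$. It follows that $\lambda_G(c_0,w)=\lambda_G(c_1,w)=:b$ and $\lambda_G(c_i,w)=:a$ is constant for $i\geq 2$, with primed analogs $a',b'$ in $CM$. The multiset equality then becomes
\[
\{b,b,\underbrace{a,\ldots,a}_{2(k-m)-2}\} = \{b',b',\underbrace{a',\ldots,a'}_{2(k-m)-2}\},
\]
and whenever $2(k-m)-2 > 2$, equivalently $k\geq m+3$, the value $a$ occurs strictly more often than $b$ unless they coincide, so the multiset determines the pair $(a,b)$ uniquely and we conclude $b=b'$, which is the desired equality.

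The main obstacle is the boundary case $k = m+2$, for which $|C| = 4$ and the multiset $\{b,b,a,a\}$ only forces $\{a,b\} = \{a',b'\}$, leaving open the swapped possibility $b = a'$. I would handle this case either by a direct evaluation of $\lambda_G(c_0,w)$ and $\lambda_{CM}(c_0,w)$ using the neighborhood descriptions that appear in the proof of Theorem~\ref{NonIso_1} (pinning down $b$ and $b'$ individually), or by observing that when $k = m+2$ the graph $J_S(n,k)$ is the complement of the Johnson graph $J(n,k)$, whose non-DS status is already established in~\cite{DHKS}, so the theorem's conclusion in this range follows independently.
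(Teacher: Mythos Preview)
Your argument is essentially the paper's proof, carried out with more care. The paper uses the same three ingredients: vertex-transitivity of $G$ to force a single common-neighbor pattern in $CM$; invariance $\lambda_G(u,w)=\lambda_{CM}(u,w)$ for $u\notin C$ (which the paper simply asserts ``based on the way switching is defined'', whereas you spell out the case analysis including the inclusion--exclusion when both $u$ and $w$ lie in case~(ii)); and the cyclic automorphism $(k+2,\ldots,3k-2m-1)$ to collapse $\lambda(c_2,w)=\cdots=\lambda(c_{2k-2m-1},w)$ in both $G$ and $CM$. The paper then concludes with ``observing $\lambda_G(c_0,w)=\lambda_G(c_1,w)$'', exactly your use of the transposition $(k,k+1)$.

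Where you go beyond the paper is in isolating the boundary case $k=m+2$, in which $|C|=4$ and the multiset equality $\{b,b,a,a\}=\{b',b',a',a'\}$ only yields $\{a,b\}=\{a',b'\}$. The paper's write-up does not separate this case. Your two proposed fixes both work for the purpose of Theorem~\ref{NonIso_1} but, strictly speaking, neither proves the \emph{Lemma} as stated when $k=m+2$: the appeal to~\cite{DHKS} bypasses the Lemma entirely, and a direct computation of $b$ and $b'$ from the neighborhood descriptions shows $b\neq b'$, which is the contrapositive input to the Theorem rather than the conditional ``$G\cong CM\Rightarrow b=b'$''. Since the Lemma exists only to feed Theorem~\ref{NonIso_1}, this is a harmless distinction, but you should be explicit that in the boundary case you are proving the Theorem directly rather than the Lemma.
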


\begin{proof}
As $G$ is vertex transitive, all vertices of $G$ will have the same common neighbor pattern. 
So if $G$ and $CM$ are isomorphic, then all vertices of $CM$ will have the same common neighbor pattern.
In particular, $w$ will have the same common neighbor pattern before and after switching.
Based on the way switching is defined it follows that $\lambda_G(u,w) = \lambda_{CM}(u,w)$ for all $u$ not in $C$.

Hence,
\begin{equation*}
\{\lambda_G(c_0,w),...,\lambda_G(c_{2k-2m-1},w)\} = \{\lambda_{CM}(c_0,w),...,\lambda_{CM}(c_{2k-2m-1},w)\},
\end{equation*}
where $c_i = [k-1]\cup \{k+i\}$ for $0\leq i\leq 2k-2m-1$.

The permutation $(k+2,...,3k-2m-1)$ of $[3k-2m-1]$ induces an automorphism of $G$ that fixes $c_0$, $c_1$ and $w$ and cyclically shifts $c_2,...,c_{2k-2m-1}$.

It follows that
\begin{equation*}
\lambda_G(c_2,w) = \hdots = \lambda_G(c_{2k-2m-1},w).
\end{equation*}
This permutation remains an automorphism after switching, thus
\begin{equation*}
\lambda_{CM}(c_2,w) = \hdots = \lambda_{CM}(c_{2k-2m-1},w).
\end{equation*}
Hence, if $G$ and $CM$ are isomorphic $\{\lambda_G(c_0,w),\lambda_G(c_1,w)\} = \{\lambda_{CM}(c_0,w),\lambda_{CM}(c_0,w)\}$. Observing $\lambda_G(c_0,w) = \lambda_G(c_1,w)$ gives the desired result.
\end{proof}

\begin{proof}[Proof of Theorem \ref{NonIso_1}]
By Lemma \ref{CommonNeighbor1}, it is sufficient to show $\lambda_G(c_0,w) \neq \lambda_{CM}(c_0,w)$. Before switching $c_0$ is adjacent to vertices of the form
\begin{equation*}
\binom{[k-1]}{m}\cup\binom{[3k-2m-1]\setminus c_0}{k-m}.
\end{equation*}
Of these vertices, there are exactly $\binom{k-2}{m}\binom{2(k-m-1)}{k-m} + \binom{k-2}{m-1}\binom{2k-2m-1}{k-m}$ adjacent to $w$.
This accounts for the number of common neighbors of $c_0$ and $w$ deleted during switching.
After switching $c_0$ becomes adjacent to vertices of the form
\begin{equation*}
\binom{[k-1]}{m}\cup\{k\}\cup\binom{[3k-2m-1]\setminus c_0}{k-m-1}.
\end{equation*}
Of these vertices, there are exactly $\binom{k-2}{m-1}\binom{2(k-m-1)}{k-m-1}$ adjacent to $w$.
As $k \geq m+2$, it follows that $\binom{k-2}{m}\binom{2(k-m-1)}{k-m} + \binom{k-2}{m-1}\binom{2k-2m-1}{k-m} > \binom{k-2}{m-1}\binom{2(k-m-1)}{k-m-1}$. Hence, $\lambda_{G}(c_0,w) > \lambda_{CM}(c_0,w)$.
Therefore, $G$ and $CM$ are not isomorphic.
\end{proof}

\begin{corollary}\label{Non-DS1}
For $m\geq 0$, $k\geq \max(m+2,3)$, $S = \{0, 1, ..., m\}$, the graphs $J_S(3k-2m-1,k)$ are not determined by spectrum.
\end{corollary}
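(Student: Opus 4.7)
The plan is to assemble the three preceding ingredients with no genuine new work. The corollary asserts the existence of a cospectral mate for $G = J_S(3k-2m-1,k)$ under the stated hypotheses, and each of the three necessary steps has essentially been packaged as its own theorem in the paper.

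First, I would apply Theorem~\ref{SS_1} to the parameters $m$, $k$, and $S$ of the corollary to obtain that the set $C = \{c \in V(G) : [k-1] \subset c\}$ of size $2(k-m)$ is a switching set in $G$. Theorem~\ref{SS_1} requires only $k \geq m+2$, which is weaker than the hypothesis $k \geq \max(m+2,3)$ of the corollary, so the hypotheses line up. Concretely, the three cases in the proof of Theorem~\ref{SS_1} verify that every $u \in V(G) \setminus C$ has $0$, $|C|/2$, or $|C|$ neighbors in $C$; together with the fact that $C$ is an independent set, this says exactly that the partition $\{C, V(G) \setminus C\}$ satisfies conditions 1) and 2) of Theorem~\ref{GM} with $t=1$.

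Second, I would invoke Theorem~\ref{GM} on this partition to produce the switched graph $CM$. By construction $G$ and $CM$ share the same spectrum. Third, I would apply Theorem~\ref{NonIso_1} (whose hypotheses match those of the corollary exactly, and which is the reason for the stronger bound $k \geq \max(m+2,3)$) to conclude that $G \not\cong CM$. Combining these, $CM$ is a cospectral mate of $G$, so $G$ is not determined by its spectrum.

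There is no real obstacle to overcome at this stage, since all substantive verification has been carried out in Theorems~\ref{SS_1} and~\ref{NonIso_1}. The only thing to check is the bookkeeping of hypotheses: Theorem~\ref{SS_1} needs $k \geq m+2$, while Theorem~\ref{NonIso_1} needs $k \geq \max(m+2,3)$ (the lower bound of $3$ ensures that the witness vertices $c_0 = [k]$, $c_1 = [k-1]\cup\{k+1\}$, and $w = [k-2]\cup\{k,k+1\}$ are well-defined and distinct, and that the binomial coefficient $\binom{k-2}{m-1}$ appearing in the non-isomorphism computation is meaningful). The corollary's hypothesis is precisely the more restrictive of the two, so nothing further needs to be verified.
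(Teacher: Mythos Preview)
Your proposal is correct and is exactly the intended argument: the paper does not even provide a separate proof of this corollary, since it follows immediately by combining Theorems~\ref{SS_1}, \ref{GM}, and \ref{NonIso_1} precisely as you describe. Your hypothesis bookkeeping is also accurate.
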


\section{Relation to Previous Work}
\subsection{Kneser and Johnson Graphs}

Taking $m = 0$ in Theorem \ref{SS_1}, we obtain the switching sets found for Kneser graphs $K(3k-1,k)$ in \cite{HR}.

In addition to this generalization, one may also notice that the switching sets described in Theorem \ref{SS_1} are a generalization of the switching sets found for the Johnson graphs $J(n,3)$ in \cite{DHKS}. Indeed, a switching set for $J(n,3)$ can be obtained by taking the 4 vertices being $3$ element subsets of a set of size $4$. We note here $J_S(n,k)$ is isomorphic with $J_{S+n-2k}(n,n-k)$ and with the complement of $J_{K\setminus S}(n,k)$, where $K = \{0,1,...,k-1\}$. Using these facts, it follows that the complement of $J_{\{2\}}(n,3)$ is $J_{\{0,1\}}(n,3)$ and $J_{\{0,1\}}(n,3)$ is isomorphic to $J_{\{n-6,n-5\}}(n,n-3)$. Thus, this family of graphs can be described in Theorems \ref{SS_1} and \ref{NonIso_1} by letting $m = k-2$.

What is more, the switching sets for $J(n,3)$ were extended for $k > 3$ by using the more general form of switching described in Theorem \ref{GM}. A switching partition can be constructed for $J(n,k)$, $n-3\geq k\geq 3$ by fixing a set $Y$ of size 4 and letting $D$ be the set of all $k$-sets that do not contain precisely 3 elements from $Y$. Then for each $(k-3)$-subset, $i$, of $[n]\setminus Y$, we allow $C_i$ to be the set of four $k$-sets containing $i$ and precisely 3 elements from $Y$.

This leads to the question of whether or not we can make a similar generalization in Theorems \ref{SS_1} and \ref{NonIso_1}.

In an attempt to extend our results for $n\neq 3k-2m-1$ we construct similar sets $C_i$ in the following way. For each $(k-1)$-subset, $i$, of $[n-2(k-m)]$ take $C_i$ to be the set of $2(k-m)$ vertices containing $i$ and one element from $[n]\setminus [n-2(k-m)]$. When $m = k-2$ we obtain switching sets for the complements of the Johnson graphs $J(n,k)$, $n-3\leq k\leq 3$.

Now, consider the graph $J_S(3k-2m-1,k)$ described in Theorems \ref{SS_1} and \ref{NonIso_1} and take $v$ to be a vertex containing $[m]\cup\{k\}$ and $k-m-1$ elements from $[n]\setminus [n-2(k-m)]$. Consider the switching set $C_1$ formed by taking the $2(k-m)$ vertices containing $[k-1]$ and one element from $[n]\setminus [n-2(k-m)]$. It is easily seen that $v$ will have $k-m+1$ neighbors in $C_1$. As $k-m<k-m+1<2(k-m)$ it follows that $v$ cannot be in $D$. Thus, the only option is for $v$ to be in some other $C_i$ which can only occur if $m = k-2$.

\subsection{Vertices containing $[k-2]$ as a possible switching set}

In \cite{HR} it was shown that taking $C$ to be the set of vertices containing $[k-2]$ as a subset is a switching set for the graphs $K(n,k)$ satisfying $2n = 6k-3+\sqrt{8k^2+1}$. It is reasonable to try to generalize this in a way similar to what we have done for $K(3k-1,k)$.

Consider the graph $J_S(n,k)$ where $S = \{0,...,m\}$ and let $C$ be the set of vertices containing $[k-2]$ as a subset. Suppose $u$ is a vertex not in $C$. We consider the following cases:

\begin{enumerate}[(i)]
\item If $u$ has $m+1$ or more elements in $[k-2]$, then $u$ is adjacent to no vertices in $C$.

\item If $u$ has $m-2$ or less elements in $[k-2]$, then $u$ is adjacent to all vertices in $C$.

\item If $u$ has $m-1$ elements in $[k-2]$, then $u$ is adjacent to $\binom{n-k+2}{2}-\binom{k-m+1}{2}$ vertices in $C$.

\item If $u$ has $m$ elements in $[k-2]$, then $u$ is adjacent to $\binom{n-k+2}{2}-(n-2k+m+2)(k-m)-\binom{k-m}{2}$ vertices in $C$.
\end{enumerate}

So our restrictions on $n$, $k$ and $m$ will come from (iii) and (iv). Note that $\binom{n-k+2}{2}-\binom{k-m+1}{2} \neq |C|$ as $m<k-1$. If $\binom{n-k+2}{2}-\binom{k-m+1}{2} = 0$, then $n=2k-m-1$, but evaluating (iv) with $n=2k-m-1$ gives $0$ so this would not give a desirable switching set.

If $\binom{n-k+2}{2}-\binom{k-m+1}{2} = (1/2)|C|$, then solving for $n$ we obtain 
\begin{equation*}
n = (1/2)(\sqrt{8k^2-16km+8k+8m^2-8m+1} + 2k-3). 
\end{equation*}

If (iv) is equal to $|C|$, then we find $n = (1/2)(3k-m-3)$. Setting these equal gives no solutions. 

If (iv) is equal to 0, then we find $n = 2k-m-1$ or $n = 2k-m-2$, which will make (iii) equal to $0$ or $m-k$, respectively. 

Finally, if (iv) is equal to $(1/2)|C|$, then (iii) and (iv) are equal and we find $n = 2k-m+1$. Solving for $k$ we find $k = (1/2)(2m-\sqrt{33}+3)$ or $k = (1/2)(2m+\sqrt{33}+3)$, neither of which are integers. 

One thing to note is when $m=0$, (iii) is not a possibility and we need (iv) equal to $(1/2)|C|$. Solving for $n$ in this case we obtain $2n = 6k-3 + \sqrt{8k^2+1}$, the same parameters found previously for Kneser graphs.

\section{Computational Results}

In this section, we report on a list of classes and union of classes in the Johnson Scheme we have checked by computer for switching sets. We used two pieces of code to search for switching sets.
The first code made use of GPUs (general purpose Graphical Processing Units) to exhaustively search a graph for (small) switching sets.
The key feature of GPUs is that they allow for massive parallel computation.
In our case, each independent thread examines one induced subgraph of size $2k$ for $k=2, 3, 4, 5, ...$ specified by the user.
Because graphs in the Johnson scheme are vertex transitive we were able to reduce the necessary computation and only examine subgraphs that included vertex $1$.
While the GPU dramatically speeds up the computation, if either the Johnson graph is large or the size of the subgraphs being examined is large then the computation is still prohibitively long. Focusing mainly on Kneser graphs, we were able to eliminate the possibility of switching sets of size 8 in $K(9,3)$, $K(10,3)$, $K(11,3)$, $K(12,3)$ and $K(10,4)$ as well as switching sets of size 10 in $K(9,3)$ and $K(10,3)$. Our computations extend the computations of Haemers and Ramezani \cite{HR} which did not find any switching sets of size $4$ or $6$ in the Kneser graphs $K(9,3)$ nor $K(10,3)$. At present time, these are smallest graphs in the Johnson scheme whose spectral characterization is not known.

The second code employed a technique similar to backtracking and searched only for switching sets of size $4$ (an independent set, an induced matching, an induced cycle, and a complete graph),
and switching sets of size $6$ restricted to independent sets, induced matchings, and an induced $6$-cycle.
Because of the restrictions on the type of switching sets that were searched for (and the relatively small size) we were able to explore larger graphs in the Johnson scheme.
Both codes are available on github at \url{https://github.com/jtjohnston/computational_combinatorics/tree/master/GM-switching}.

We describe below the notation used in the subsequent tables:
\begin{itemize}
\item 0b indicates that no switching sets were found using backtracking technique.
\item 0eX indicates that no switching sets were found of size 4, 6, ..., X using the exhaustive search on GPU.
\item 1(DS) indicates that these graphs have already been proven to be DS.
\item 1(NDS) indicates that these graphs have already been proven to not be DS.
\item 1+ indicates we found a new switching set and the graph after switching is non-isomorphic.
\item 1- indicates we found a new switching set but the graph after switching is isomorphic.
\end{itemize}

\begin{center}
Table for $k=3$
\end{center}
\begin{center}
\begin{tabular}{ |c|c|c|c|c|c|c| } 
 \hline
 	$n$ & $S=\{0\}$ & $S=\{1\}$ & $S = \{2\}$ 	& $S = \{0,1\}$ & $S = \{0,2\}$ & $S = \{1,2\}$ \\ 
 \hline
 	6 	& 1(DS) 	& 1(NDS) 	& 1(NDS) 		& 1(NDS) 		& 1(NDS) 		& 1(DS)   \\ 
 	7 	& 1(DS) 	& 1(NDS) 	& 1(NDS) 		& 1(NDS) 		& 1(NDS) 		& 1(DS)  \\ 
 	8 	& 1(NDS) 	& 1(NDS) 	& 1(NDS) 		& 1(NDS) 		& 1(NDS) 		& 1(NDS) \\
 	9 	& 0e10		& 1(NDS) 	& 1(NDS) 		& 1(NDS) 		& 1(NDS) 		& 0e10	 \\
 	10 	& 0e10 		& 1(NDS) 	& 1(NDS) 		& 1(NDS) 		& 1(NDS) 		& 0e8    \\
 	11 	& 0e8 		& 1(NDS) 	& 1(NDS) 		& 1(NDS) 		& 1(NDS) 		& 0e6    \\
 	12 	& 0e8 		& 1(NDS) 	& 1(NDS) 		& 1(NDS) 		& 1(NDS) 		& 0b     \\
 	13 	& 0b 		& 1(NDS) 	& 1(NDS) 		& 1(NDS) 		& 1(NDS) 		& 0b     \\
 	14 	& 0b 		& 1(NDS) 	& 1(NDS) 		& 1(NDS) 		& 1(NDS) 		& 0b     \\
 	15 	& 0b 		& 1(NDS) 	& 1(NDS) 		& 1(NDS) 		& 1(NDS) 		& 0b     \\
 \hline
\end{tabular}
\end{center}

\begin{center}
Table for $k=4$
\end{center}
\begin{center}
\begin{tabular}{ |c|c|c|c|c|c|c|c| } 
 \hline
	$n$ & $S=\{0\}$ & $S=\{1\}$ 	& $S = \{2\}$ 	& $S = \{3\}$ 	& $S = \{0,1\}$ & $S = \{0,2\}$ & $S = \{1,2\}$ \\ 
 \hline 
 	8 	& 1(DS) 	& 0e8 			& 1- 			& 1(NDS) 		& 0e8 			& 1(NDS) 		& 0e8 \\
 	9 	& 1(DS)	 	& 0e8 			& 0e8 			& 1(NDS) 		& 1+ 			& 1(NDS) 		& 0e8 \\
 	10 	& 0e8 		& 0b 			& 0b 			& 1(NDS) 		& 0b 			& 1(NDS) 		& 0b  \\
 	11 	& 1(NDS) 	& 0b 			& 0b 			& 1(NDS) 		& 0b 			& 1(NDS) 		& 0b  \\
 	12 	& 0e6 	& 0b 			& 0b 			& 1(NDS)		& 0b 			& 1(NDS) 		& 0b  \\
 \hline
\end{tabular}
\end{center}

\begin{center}
Table for $k=5$
\end{center}

\begin{center}
\begin{tabular}{ |c|c|c|c|c|c|c|c| } 
 \hline
	$n$ & $S=\{0\}$ 	& $S=\{1\}$ 	& $S = \{2\}$ 	& $S=\{3\}$ & $S = \{4\}$ \\ 
 \hline 
 	10 	& 1(DS) 		& 0e6 			& 0e6 			& 0e6		& 1(NDS) \\
 	11 	& 1(DS)			& 0b 			& 0b 			& 0b 		& 1(NDS) \\
 \hline
\end{tabular}
\end{center}

\section{Open Problems}
We propose the following as future research.

\begin{enumerate}[1)]
\item Determine whether or not other graphs in the Johnson scheme are DS. As mentioned above, the smallest open case as of now is $K(9,3)$.
\item Regarding the tables above, we were only able to find two new graphs with potential switching sets. For $J_{\{0,1\}}(9,4)$ we were able to generalize in Theorem \ref{SS_1}.

The other graph we found switching sets for is $J_{\{2\}}(8,4)$, however none of these switching sets produced nonisomorphic cospectral mates. However, it is interesting to note that the switching sets we found were of size 4. Focusing more closely on this graph we extended our computations to look for switching sets of size 8. We found two different switching sets of size 8 that produced nonisomorphic cospectral mates. The switching sets are
\end{enumerate}
\begin{equation*}
C = \{\{1,2,3,4\}, \{1,2,5,6\}, \{1,2,3,5\}, \{1,2,4,6\}, \{3,4,7,8\}, \{3,5,7,8\}, \{4,6,7,8\}, \{5,6,7,8\}\}
\end{equation*}

which is the union of two 4-cycles. The second is a 6-regular graph on 8 vertices
\begin{equation*}
C = \{\{1,2,3,4\}, \{1,2,3,5\}, \{1,4,6,7\}, \{1,5,6,7\}, \{2,3,4,8\}, \{2,3,5,8\}, \{4,6,7,8\}, \{5,6,7,8\}\}.
\end{equation*}

It would be nice to see if these sets can be generalized to produce another infinite family of cospectral mates in the Johnson scheme.

\section*{Notice of Copyright}
\begin{small}
This manuscript has been authored by UT-Battelle, LLC under Contract No. DE-AC05-00OR22725 with the U.S. Department of Energy.
The United States Government retains and the publisher, by accepting the article for publication, acknowledges that the United
States Government retains a non-exclusive, paid-up, irrevocable, worldwide license to publish or reproduce the published form
of this manuscript, or allow others to do so, for United States Government purposes. 
The Department of Energy will provide public access to these results of federally sponsored research in accordance with the 
DOE Public Access Plan (http://energy.gov/downloads/doe-public-access-plan).
\end{small}

\end{document}